\renewcommand{\cite}{\citet}
\makeatletter \@addtoreset{equation}{section} \makeatother
\renewcommand\thefigure{\thesection.\@arabic\c@figure}
\renewcommand\thetable{\thesection.\@arabic\c@table}
\theoremstyle{plain}
\newtheorem{theorem}{Theorem}[section]                                                                    
\newtheorem{lemma}[theorem]{Lemma}
\theoremstyle{definition}
\theoremstyle{remark}
\newcommand{\1}{{\rm 1}\kern-0.24em{\rm I}} 
\begin{document}

\date{March 31th 2009, accepted }
\keywords{Renewal Theory, Pinning Models, Criticality, Subordinators, Regenerative Sets} 
\subjclass{60K35, 82B41}

\author{Julien Sohier }
\address{Universit\'{e} Paris Diderot, LPMA,  Math\'{e}matiques, case 7012; 2,place Jussieu, 75251 Paris Cedex 05, France}
\email{sohier@math.jussieu.fr}

\title[Finite size scaling for homogeneous pinning models]{Finite size scaling for homogeneous pinning models} 

\begin{abstract}
  
Pinning models are built from discrete renewal sequences
by rewarding (or penalizing) the trajectories according
to their number of renewal epochs up to time $N$, and $N$ is then
sent to infinity. They are
statistical mechanics models to which a lot of attention has been paid
both because they are very relevant for applications
and because of their {\sl exactly solvable character}, while displaying
a non-trivial phase transition (in fact, a localization transition).
The order of the transition depends on the tail of the inter-arrival
law of the underlying renewal and  the transition is continuous
when such a tail is sufficiently heavy: this is the case on which we 
will focus. The main purpose of this work is
to give a mathematical treatment of the {\sl finite size scaling limit} 
of pinning models, namely studying
the limit (in law) of the process close to criticality when the system 
size is proportional to the correlation length. \\
 
\end{abstract}

\maketitle

\section{Introduction}

   \indent The model of directed polymers interacting with a one dimensional defect turns out to be quite satisfactory to describe various biological or physical phenomena, such as $(1+1)$ interface wetting \cite{Der}, the problem of depinning of flux lines from columnar defects in type II superconductors \cite{Ne} and the denaturation transition of DNA in the Poland Sheraga approximation \cite{Ka}. The first results obtained by the physical community have been summarized in \cite{Fis}, a paper which received a lot of attention from mathematicians. Since then, this model has been under close  scrutinity, including in the last few years (the recent mathematical monograph \cite{GG}, but also \cite{CGZ}, \cite{IY} or \cite{DGZ}). Here, we are going to deal with the homogeneous version of the model; this version is mathematically remarkable in the sense that it is essentially completely solvable. For example, the model can have a transition of any given order \cite{GG} depending on the value of $\alpha$, a parameter in the definition of the system ($1 + \alpha$ is sometimes called loop exponent in the literature).\\ 
    \indent A central role in statistical mechanics is played by the correlation length, a quantity that we will denote by $\xi$. Properly defining this quantity in an (infinite volume) model requires some work and in most of the cases, such a concept does not correspond to a unique mathematical object. All the same, it is essential that any ``reasonably defined'' correlation length behaves in the same way close to criticality. And precisely close to criticality, for a large class of models, $\xi$ becomes the only ``relevant'' scale in the system (see below for a more precise explanation). This idea is one of the basic concepts of the so called ``finite size scaling'' theory (see \cite{Ca}). Here we  illustrate in very concrete terms this concept via the scaling limit of pinning systems, obtaining a limiting behavior if the parameter is in a small, size dependent window near criticality as the size of the system goes to $\infty$. \\ 
 \indent In this first part, we will try to see in which way the above, rather unprecise statements, can be made  quantitative. The second part should be seen as a warm up for the two last parts; actually, in the one dimensionnal wetting, the existing literature makes the statement quite easy. In the third part, we introduce our general return model, and make the precise statements of our result in terms of scaling limits of the zero level set of our system near criticality. \\ \indent In this note, $\alpha$ will always be a real number in $(0,1)$. For positive sequences $(a_n)_{n \geq 0}$ and $(b_n)_{n \geq 0}$, we will write $a_n \sim b_n$ if $\lim_{n \to \infty} a_n/b_n = 1 $, and for  random variables $(X_n)_{n \geq 0}$ and $X$ with values in a Polish space $(E,d)$, we will write $X_n \Rightarrow X$ if the sequence $(X_n)_{n \geq 0}$ converges in law towards $X$. \\
 
    \indent 1.1 \textbf{A first model.}  
     Let $\tau$ be a recurrent renewal process with law $\mathbf{P}$, that is $\tau = \{ \tau_n \}_{n \geq0}$ where $\tau_0 = 0$ and $(\tau_{i+1} - \tau_i)_{i \geq0}$ are iid with common law $\mathbf{P}$, where $\mathbf{P}$ is $\mathbf{N}$ valued and verifying, as $k \rightarrow \infty$, 

     \begin{equation} \label{P}
 \mathbf{P} (\tau_{i+1} - \tau_i = k)  \sim \frac{C}{k^{1+\alpha}}.
     \end{equation}
     
  \indent
 We will actually consider a slightly more general model in section 3, allowing in particular $\tau$ to be transient, but for simplicity, we will restrict ourselves temporarly to this setup.
  The set $\tau \cap [0,N]$ can be considered as a random subset of  $\{0,1,\ldots,N \}$. We define the law $\mathbf{P}_{N,\beta}$ on  the subsets of $\{0,1,\ldots,N \}$ by 
   \begin{equation} \label{Def} 
    \frac{d\mathbf{P}_{N,\beta}}{d\mathbf{P}} (\tau) = \frac{1}{Z_{N,\beta}} \exp( \beta \mathcal{N}_N(\tau)),
    \end{equation}  
     where  $\mathcal{N}_N(\tau)$ is the cardinality of the set $\tau \cap [0,N]$. In \eqref{Def}, $ Z_{N,\beta}$ is the partition function of the model and of course  
   \begin{equation}
 Z_{N,\beta} = \mathbf{E} \left[ \exp(\beta \mathcal{N}_N(\tau)) \right].
\end{equation}
   \indent It is not difficult to show that the limit of the quantity $F_N(\beta) := \frac{1}{N} \log{ Z_{N,\beta}}$ exists and is non negative. We denote it by $F(\beta)$. $F(\cdot)$ is a convex, non decreasing and non negative function, and there exists a critical value of $\beta$ such that $ \beta \leq \beta_c$ implies $F(\beta) = 0$ and $\beta > \beta_c$ implies the positivity of $F(\beta)$. If $F(\beta) = 0$, the system is said to be \textit{delocalized}, and it is \textit{localized} otherwise. As the underlying renewal is recurrent, we actually have $\beta_c = 0$ (see \cite{GG}, chapter 2 and section 3 below).\\

 \indent 1.2 \textbf{Finite size scaling.} We define the correlation length of our system  by 
 \begin{equation}
 \xi(\beta) := \frac{1}{F(\beta)},
\end{equation}  
 if $\beta > 0$, and $\xi(\beta) = \infty$ otherwise. This definition has first been introduced in the physical litterature in \cite{Fis}, and its mathematical relevance with respect to other quantities (in particular with the {\sl correlation function}) is discussed in \cite{GG1}. Also note that the concept of correlation length has been considered in depth in the inhomogeneous case (that is when the reward-penalty become site-dependent and random), see e.g. \cite{To}. It has been shown in \cite{GG} that  
  \begin{equation} \label{F}
 \xi(\beta) \stackrel{ \beta \searrow \beta_c}{\sim }c_{\alpha} (\beta - \beta_c)^{-1/\alpha},
\end{equation} 
 for some (explicit) constant $ c_{\alpha} >0$. \\
 \indent 
  We expect finite size effects to appear only if the system is of the scale of the correlation length, that is only if $\xi(\beta)/N$ stays fixed (say equal to $q$). It is clear (considering \eqref{F}) that this is equivalent to keeping $(\beta - \beta_c)^{1/\alpha}N$ fixed (at least close to criticality). In this case, the observables of our system have a different behavior near criticality than their behavior in each of the two phases in the infinite volume limit. In an alternative way, we could say that close to criticality, the only relevant length scale is the correlation length. In order to make this precise, we need two basic objects. The first one is the \textit{stable subordinator} with index $\alpha$. Recall that a subordinator is an increasing L\'{e}vy process; a subordinator $(\sigma^{(\alpha)}_{s})_{s \geq 0}$ is said to be \textit{$\alpha$ stable} (for $\alpha \in (0,1)$) if, for all $M \in \mathbf{R}^+$, the process  $(\sigma^{(\alpha)}_{Ms})_{s \geq 0}$ has the same law than the process $(M^{1/\alpha}\sigma_s^{(\alpha)})_{s \geq 0}$. The second one is its local time $(L^{(\alpha)}_t)_{t \geq 0}$, which is simply the generalized inverse of $(\sigma^{(\alpha)}_{s})_{s \geq 0}$, that is 
\begin{equation} \label{DefTpsLoc}
 L^{(\alpha)}_t := \inf \left\{ s \geq 0, \sigma^{(\alpha)}_{s} \geq t \right\}.
\end{equation} 
  It will be shown in the last part (Lemma 1) that there exists $C_K > 0$ such that 
 \begin{equation}
 \left(C_K \frac{ \mathcal{N}_N}{N^{\alpha}} \right)_{N \geq 0} \Rightarrow L^{(\alpha)}_1,
\end{equation}
 and moreover the sequence $\left(C_K \frac{ \mathcal{N}_N}{N^{\alpha}} \right)_{N \geq 0}$ is uniformly integrable (Lemma 2). Using these two facts and \eqref{F}, one gets that the \textit{finite size correlation length} $\xi_N(\beta) := 1/F_N(\beta)$ verifies 
 \begin{equation} \label{eq1}
  \frac{\xi_N(\beta)}{N} \stackrel{ N \to \infty } \sim  \left( \log \mathbf{E} \left[ \exp \left( q^{\alpha}
c_{\alpha}^{\alpha} \frac{ \mathcal{N}_N}{N^{\alpha}} \right) \right] \right)^{-1} \sim f(q),    
\end{equation} where $f(q) = \left( \log \mathbf{E} \left[ \exp \left( \frac{q^{\alpha}
c_{\alpha}^{\alpha}}{C_K} L^{(\alpha)}_1 \right) \right] \right)^{-1}$. \\ \indent 
   Let us now consider two very relevant observables (in the usual window): the (finite volume) \textit{contact fraction} $\rho_N(\beta)$ (that is the expectation of $\mathcal{N}_N/N$ with respect to the finite polymer measure) and the (once again finite volume) \textit{specific heat} $\chi_N(\cdot)$ (that is the second derivative of the finite volume free energy). In our setup, these quantities can be written in an analogous way as above. Specifically, we get
  \begin{equation}
  \rho_N(\beta) \sim N^{\alpha -1} \frac{1}{C_K}  \frac{ \mathbf{E} \left[ L^{(\alpha)}_1 \exp \left( \frac{q^{\alpha}
c_{\alpha}^{\alpha}}{C_K} L^{(\alpha)}_1 \right) \right] }{ \mathbf{E} \left[ \exp \left( \frac{q^{\alpha}
c_{\alpha}^{\alpha}}{C_K} L^{(\alpha)}_1 \right) \right]} := N^{\alpha -1} \hat{\rho}(q).
\end{equation}
  \indent In an equivalent way, introducing $\tilde{\rho}(x) := x^{1 - \alpha} \hat{\rho}(q)$, we get  $\rho_N(\beta) \sim \xi(\beta)^{\alpha -1} \tilde{\rho}(q)$, that is we can express  $\rho_N(\cdot)$ (as $ N \to \infty$) as a function of $\xi(\cdot)$. Analogously, we have 
 \begin{equation}
  \frac{\partial^2}{\partial^2 \beta} F_N(\beta) = \chi_N(\beta) \sim N ^{2 \alpha -1} \frac{1}{C_K^2}\frac{ \mathbf{E} \left[ (L^{(\alpha)}_1 - \mathbf{E} [ L^{(\alpha)}_1 ])^2 \exp \left( \frac{q^{\alpha}
c_{\alpha}^{\alpha}}{C_K} L^{(\alpha)}_1 \right) \right] }{ \mathbf{E} \left[ \exp \left( \frac{q^{\alpha}
c_{\alpha}^{\alpha}}{C_K} L^{(\alpha)}_1 \right) \right]},
\end{equation}
 so that, once again, we get the following equivalence: 
 \begin{equation}
 \chi_N(\beta) \sim \xi(\beta)^{2 \alpha -1} \hat{\chi}(q).
\end{equation}
 \indent We show here that these two particular (and physically relevant) examples are actually special cases for a much more general phenomenon, namely the convergence in law of the whole rescaled system as $\xi( \beta)/N$ stays fixed. We are able to compute the scaling limit of the system in terms of a Radon Nykodym derivative with respect to the $\alpha$ regenerative set. \\

 \section{ A look at the wetting model}

   As a warm-up, we will deal with the case in which the underlying law of our polymer model is the simple symmetric random walk conditioned to stay non negative, that is a sequence $(S_n)_{n \geq 0}$ where $S_0 = 0$, the variables $(S_i - S_{i-1})_{i \geq 1}$ are iid and such that $\mathbf{P} \left[ S_i - S_{i-1} =1 \right] = \mathbf{P} \left[ S_i - S_{i-1} = -1 \right] = \frac{1}{2}$.
  We introduce the following probability law on $ \mathbf{Z}^N$:    
\begin{equation}
  \frac{ d \mathbf{P}_{\beta,N}}{ d\mathbf{P}} (x)  =
  \frac{1}{Z_{\beta,N}} \exp(H_{\beta,N}(x)) \1_{ x_1 \geq 0, \ldots, x_N \geq 0 } 
\end{equation} 
 where the Hamiltonian of our system is given by
 
\begin{equation}
  H_{\beta,N}(x) = \beta\sum_{i=1}^N
  \1_{x_i=0} = \beta \mathcal{N}_N.
\end{equation}
 \indent 
  Here also, it is not hard to prove the existence of $\lim_{N \rightarrow \infty} \frac{1}{N} \log( Z_{\beta,N}) = F(\beta)$ (see \cite{GG}). In particular, for $N$ even, we have the inequality 
 \begin{equation*}
  Z_{\beta,N} = \mathbf{E} \left[ \exp(H_{\beta,N}(S)) \1_{ S_1 \geq 0, \ldots, S_N \geq 0 } \right]
\end{equation*}
  \begin{equation}
  \geq \mathbf{E} \left[ \exp(H_{\beta,N}(S)) \1_{ S_1 > 0, \ldots, S_N > 0 } \right] = \mathbf{P} \left[ S_1 > 0, \ldots, S_N > 0 \right] \sim \frac{1}{ \sqrt{2 \pi N}},
\end{equation}
  where the last equivalence is well known (cf \cite{Fe}), which entails that for every $\beta \in \mathbf{R}$ , $F(\beta) \geq 0$. \\ \indent
    In this model, it is possible to show that $\beta_c$ is actually equal to $\log(2)$ (see \cite{GG} chapter 1, or \cite{IY}). 
     We are interested in the scaling limit of the system near criticality. To be more explicit, we define the application: $X^N: \mathbf{R}^N \rightarrow
C([0,1])$ by:
\begin{equation} \label{ent}
  (X_t^N(x))_{t \in [0,1]} = \frac{x_{[Nt]}}{N^{1/2}} + (Nt-[Nt])\frac{
    x_{[Nt]+1}-x_{[Nt]}}{N^{1/2}}.
\end{equation}
 and we introduce the sequence of measures $
 Q_{\beta,N}
:= \mathbf{P}_{\beta,N} \circ (X^N)^{-1}, N \geq 0.$ Of course, in \eqref{ent}, $[x]$ denotes the integer part of the real number $x$.  \\ \indent
  Let $(B_t)_{t \geq 0}$ be a standard Brownian motion defined on
  our probability space $(\Omega,\mathcal{F},\mathcal{P})$ in the
  sequel, and $(L_t)_{t \geq 0}$ denote its local time at zero. 
  It  has been proven in \cite{IY} that the sequence $(Q_{\beta_c,N})_{N \geq 0}$ converges weakly to the law of the process $(|B_t|)_{t \geq 0}$.  This result is actually very natural since it is quite easy to see that  the law of the process $(S_n)_{n \geq 0}$ under $\mathbf{P}_{\beta_c,N}$ has  the same law as $(|S_n|)_{n \geq 0}$ under $\mathbf{P}$.   \\ \indent
 The following result gives some intuition about the more general theorem we are going to prove in the next section in terms of zero level sets. 
 \begin{theorem} Let $\varepsilon \in \mathbf{R}.$ The sequence of measures $ (Q_{\beta_c + \varepsilon N^{-1/2},N})_{N \geq 0}$ converges weakly as $ N \to \infty$; the law of the limiting process is absolutely continuous with respect to $(|B_t|)_{t \geq 0}$ with Radon-Nykodym density given by  $ e^{\varepsilon L_1} / \mathbf{E}\left[ e^{\varepsilon L_1} \right] $. \end{theorem}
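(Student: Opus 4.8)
\medskip
\noindent\textbf{Strategy of the proof.} The plan is to realize $Q_{\beta_N,N}$, with $\beta_N:=\beta_c+\varepsilon N^{-1/2}$, as an explicit exponential tilt of $Q_{\beta_c,N}$, and then to pass to the limit in the tilt, using the known convergence $Q_{\beta_c,N}\Rightarrow$ law of $(|B_t|)$ together with a joint invariance principle for the rescaled walk and its number of contacts. Write $|B|:=(|B_t|)_{t\in[0,1]}$. Since $\mathbf{P}_{\beta_N,N}$ and $\mathbf{P}_{\beta_c,N}$ are built from the same reference measure by multiplying by the same indicator $\1_{x_1\geq0,\ldots,x_N\geq0}$ and the weight $e^{\beta\mathcal{N}_N}$, they differ only through the exponential factor, so that
\begin{equation*}
\frac{d\mathbf{P}_{\beta_N,N}}{d\mathbf{P}_{\beta_c,N}}(S)=\frac{Z_{\beta_c,N}}{Z_{\beta_N,N}}\,\exp\big(\varepsilon N^{-1/2}\mathcal{N}_N(S)\big).
\end{equation*}
As $X^N$ determines $S$ (one recovers $S_k=N^{1/2}X^N_{k/N}$), $\mathcal{N}_N$ is a Borel functional of $X^N$; pushing the identity forward under $X^N$ gives, for every bounded continuous $F:C([0,1])\to\mathbf{R}$,
\begin{equation}\label{eq:sk:tilt}
\int F\,dQ_{\beta_N,N}=\frac{\mathbf{E}_{\beta_c,N}\!\big[F(X^N)\,e^{\varepsilon N^{-1/2}\mathcal{N}_N}\big]}{\mathbf{E}_{\beta_c,N}\!\big[e^{\varepsilon N^{-1/2}\mathcal{N}_N}\big]}.
\end{equation}
It then suffices to prove that $\mathbf{E}_{\beta_c,N}[F(X^N)e^{\varepsilon N^{-1/2}\mathcal{N}_N}]\to\mathbf{E}[F(|B|)e^{\varepsilon L_1}]$: taking $F\equiv1$ identifies the denominator of \eqref{eq:sk:tilt} with $Z_{\beta_N,N}/Z_{\beta_c,N}$ and forces it to converge to $\mathbf{E}[e^{\varepsilon L_1}]\in(0,\infty)$, whence the ratio converges to the announced density $e^{\varepsilon L_1}/\mathbf{E}[e^{\varepsilon L_1}]$.

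The first step is to establish the joint convergence, under $\mathbf{P}_{\beta_c,N}$,
\begin{equation*}
\big(X^N,\ N^{-1/2}\mathcal{N}_N\big)\ \Rightarrow\ \big(|B|,\ L_1\big)\qquad\text{in }C([0,1])\times\mathbf{R}_+ ,
\end{equation*}
where $L_1$ is the local time at $0$ of $|B|$ (equivalently of $B$). By the identification recalled above, $(S_n)$ under $\mathbf{P}_{\beta_c,N}$ has the law of $(|S_n|)$ under the free walk; hence $X^N$ is the rescaled modulus of a simple random walk, so that $X^N\Rightarrow|B|$ by Donsker --- this is the result of \cite{IY} --- and $\mathcal{N}_N$ is the number of zeros of the free walk in $\{1,\ldots,N\}$, i.e.\ the number of points before $N$ of a renewal set of index $\alpha=1/2$, for which classical random-walk asymptotics give $N^{-1/2}\mathcal{N}_N\Rightarrow L_1$. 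To promote these two marginal statements to the joint one I would use that the rescaled contact set converges, in the Hausdorff metric, to the $1/2$-regenerative zero set of $|B|$, that $N^{-1/2}\mathcal{N}_N$ is its associated normalized local time, and that $|B|$ is reconstructed from its excursions: the family is tight, and any subsequential limit $(W,\Lambda)$ satisfies $W\sim|B|$ and $\Lambda=L_1(W)$ almost surely --- the last point because $N^{-1/2}\mathcal{N}_N$ can be squeezed between constant multiples of the genuinely continuous functionals $\tfrac{1}{N}\#\{k\leq N:\ |S_k|\leq\delta\sqrt N\}$ of $X^N$, whose limits $\int_0^1\1_{|B_s|\leq\delta}\,ds$ recover $L_1(W)$ as $\delta\downarrow0$ by the occupation-density formula.

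The second step is uniform integrability: I would check that $\mathbf{E}_{\beta_c,N}[e^{sN^{-1/2}\mathcal{N}_N}]$ stays bounded in $N$ for every $s\in\mathbf{R}$. By the same identification this equals the free-walk quantity $\mathbf{E}[e^{sN^{-1/2}\mathcal{N}_N}]$, whose boundedness is classical for the number of returns to the origin of the simple random walk --- for instance through a local estimate of the form $\mathbf{P}(\mathcal{N}_{2n}=m)\leq Cn^{-1/2}e^{-cm^2/n}$ (see \cite{Fe}), the prefactor $n^{-1/2}$ being exactly what makes the exponential sum bounded uniformly in $n$. It follows that $\{F(X^N)e^{\varepsilon N^{-1/2}\mathcal{N}_N}\}_N$ is uniformly integrable under $\mathbf{P}_{\beta_c,N}$ ($F$ being bounded); combining this with the joint convergence of the previous step and the continuity of $(w,\lambda)\mapsto F(w)e^{\varepsilon\lambda}$ yields $\mathbf{E}_{\beta_c,N}[F(X^N)e^{\varepsilon N^{-1/2}\mathcal{N}_N}]\to\mathbf{E}[F(|B|)e^{\varepsilon L_1}]$, which by the reduction above finishes the argument.

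I expect the genuine difficulty to lie in the joint convergence $(X^N,N^{-1/2}\mathcal{N}_N)\Rightarrow(|B|,L_1)$: the rescaled contact number converges to a local time, and local time is not a functional of the path that is continuous for the uniform topology, so the continuous mapping theorem is of no direct use --- one has to identify the limit through the excursion/regenerative structure (or the occupation-time squeeze above) and, along the way, make sure that the boundary effects in the contact set of $\mathbf{P}_{\beta_c,N}$ amount to only $o(\sqrt N)$ points and hence do not perturb the limit. The uniform-integrability estimate, by contrast, reduces to well-known facts about the number of returns to the origin of the simple random walk.
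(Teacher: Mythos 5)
Your overall architecture coincides with the paper's: rewrite $Q_{\beta_c+\varepsilon N^{-1/2},N}$ as an exponential tilt of the critical measure, prove joint convergence of $(X^N,\mathcal{N}_N/\sqrt N)$ towards $(|B|,L_1)$ using the identification of $\mathbf{P}_{\beta_c,N}$ with the reflected walk, and conclude via uniform integrability of $e^{\varepsilon\mathcal{N}_N/\sqrt N}$. Your uniform integrability step is sound and is essentially the paper's Lemma 2 specialized to $\alpha=1/2$ (a Chernoff bound on $\mathbf{P}(\mathcal{N}_N\geq m)=\mathbf{P}(\tau_m\leq N)$ gives the Gaussian-type tail you quote); that you test against all bounded continuous functionals instead of proving tightness (taken from \cite{CaG} in the paper) plus finite-dimensional convergence is a cosmetic difference.

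The gap is in the one step you yourself single out: the identification of the joint limit. The ``squeeze'' is not correct as written. Pathwise, $\mathcal{N}_N/\sqrt N$ is not bounded between constant multiples of $\frac1N\#\{k\leq N:|S_k|\leq\delta\sqrt N\}$; with your normalization the latter converges to $\int_0^1\1_{\{|B_s|\leq\delta\}}\,ds\approx 2\delta L_1$, so the two quantities do not even live on the same scale, and with the corrected normalization $\frac{1}{2\delta N}\#\{k\leq N:|S_k|\leq\delta\sqrt N\}$ the comparison with the number of exact zeros is only an asymptotic statement in probability (a law of large numbers over excursions), not a deterministic two-sided bound valid along every path of a subsequence. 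Proving that comparison is tantamount to proving a local-time approximation theorem, i.e.\ precisely the nontrivial ingredient you are trying to bypass; without it, a subsequential limit $(W,\Lambda)$ is only known to have the correct marginals, and a wrong joint law (say $\Lambda$ independent of $W$) would produce the untilted limit $\mathbf{E}[F(|B|)]\,\mathbf{E}[e^{\varepsilon L_1}]/\mathbf{E}[e^{\varepsilon L_1}]$ instead of the announced one. The paper closes exactly this point by invoking the strong approximation of \cite{Re} (Theorem 10.1): the walk and the Brownian motion can be coupled so that $N^{-1/4-\eta}|L_N-\mathcal{N}_N|\to0$ almost surely, which together with Brownian scaling yields the joint convergence $(S_{[Nt]}/\sqrt N,\mathcal{N}_N/\sqrt N)\Rightarrow(B,L_1)$ at once. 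So at this step you should either quote such a coupling, or a genuine downcrossing/occupation-time approximation theorem for local time; as it stands, this part of your argument does not go through.
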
 
 
 	\begin{figure}[h]
     	\begin{center}
     \input{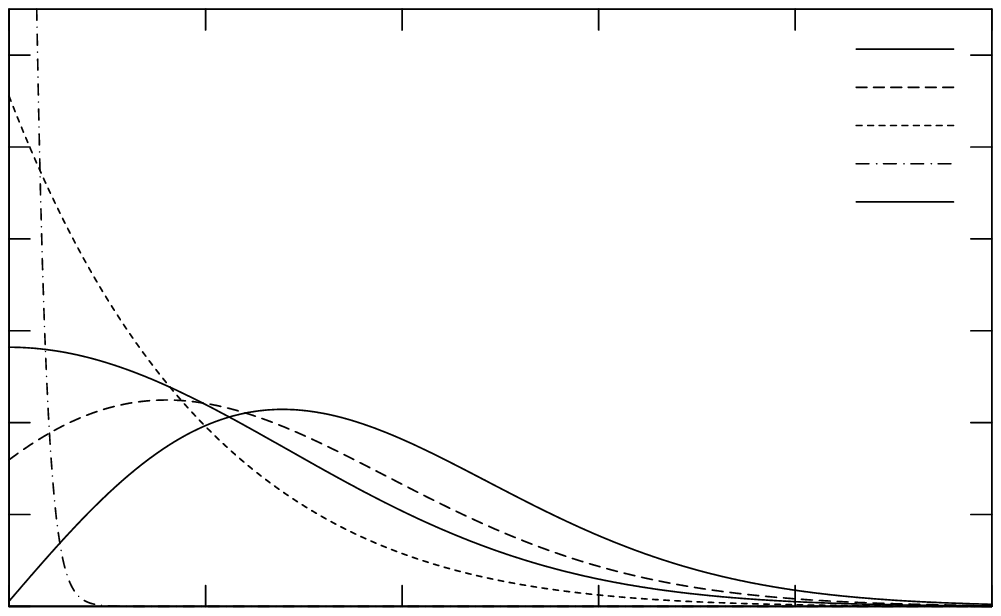}
     	\end{center} 
 \caption{ The distribution of the last point of the system for $ N = 40000$, where $Z_N(x) := \mathbf{E} \left[ \exp( \beta \mathcal{N}_N(\tau) )  \mathbf{1}_{S_N = x} \right]$ for $\beta \in \{ .1, -.2, \varepsilon/\sqrt{40000} \}$ and $\varepsilon \in \{ -1;0;1\}$. }
     	\end{figure}
  
   \begin{proof} 
    The following convergence in law is classical:   
  \begin{equation} \label{CVE}
   \frac{  \mathcal{N}_N }{ \sqrt{N}} \Rightarrow L_1,
 \end{equation} 
  but   a much stronger statement can be proved: given a Brownian motion on $(\Omega,\mathcal{F},\mathcal{P})$, one can construct a simple random walk $(S_n)_{n \geq 0}$ on the same probability space such that, for every $\eta > 0$, one has:
     \begin{equation}
 \lim_{N \to \infty}  N^{-1/4 - \eta} | L_N - \mathcal{N}_N| = 0, 
   \end{equation}
  almost surely as $N \to \infty$ (see \cite{Re}, Theorem 10.1). In particular for $\eta = 1/2$, and by the scaling property of $(L_t)_{t \geq 0}$, this implies immediately the convergence of the joint law $(S_{[Nt]}/\sqrt{N},
 \mathcal{N}_N/\sqrt{N}) \Rightarrow (B_t,L_1)$.\\ \indent
     A proof of the tightness of the sequence $ (Q_{\beta_c + \frac{\varepsilon}{\sqrt{N}},N})_{N \geq 0}$ can be found  in \cite{CaG}. We are therefore left with showing the convergence in law of the finite dimensional marginals of the process, that is we have to show that, for every $n \in \mathbf{N}$ and for every continuous bounded function $F(\cdot)$ from $[0,1]^n$ to $\mathbf{R}$, the following holds:   
  \begin{equation} \label{Cv}
   Q_{\beta_c +\frac{\varepsilon}{\sqrt{N}},N} \left[
   F \left( \omega_{t_1},\ldots,\omega_{t_n} \right) \right] \underset{N \rightarrow
     \infty}{\rightarrow}  \frac{\mathbf{E} \left[ e^{\varepsilon L_1}
   F \left(|B_{t_1}|,\ldots,|B_{t_n}| \right) \right]}{\mathbf{E}[ e^{ \varepsilon L_1}]}.
 \end{equation}  \indent 
 We already know by \cite{IY} that \eqref{Cv} holds for $\varepsilon = 0$.
 The term in the left hand side of \eqref{Cv} can be written as 
 \begin{equation}
   \mathbf{E}_{\beta_c,N} \left[ F \left( \frac{S_{[Nt_1]}}{\sqrt{N}}, \ldots, \frac{S_{[Nt_n]}}{\sqrt{N}} \right)  e^{ \varepsilon \frac{ \mathcal{N}_N}{\sqrt{N}}}  \right] / \mathbf{E}_{\beta_c,N} \left[ e^{ \varepsilon \frac{ \mathcal{N}_N}{\sqrt{N}}}  \right].
\end{equation}
  \indent Since by \ref{CVE} we have $ e^{ \frac{ \varepsilon \mathcal{N}_N}{\sqrt{N}}} \Rightarrow e^{ \varepsilon L_1}$, to prove \ref{Cv}, we just have to show that the family $ \left(e^{ \frac{ \varepsilon \mathcal{N}_N}{\sqrt{N}}} \right)_{ N \geq 0}$ is uniformly integrable. This is obvious if $\varepsilon \leq0$, and will be proved in the last section in a more general setup (see Lemma 2) for $\varepsilon > 0$, although it could be also proven directly using the exact distribution of $\mathcal{N}_N$, which is actually well known in the simple symmetric random walk setup (see for example \cite{Re}). This completes the proof.
       \\ \indent \end{proof} 
 
  \section{The renewal setup}

   \indent 3.1 \textbf{The model.} As we did in the introduction, we denote by $\tau$ the points of a renewal process, that is the
 image set of a 
 sequence of iid positive random variables $(l_i)_{i \geq 0}$ whose
 law is $\mathbf{P}$. More precisely, $\tau := \{ \tau_0,\tau_1,
 \ldots, \}$ where $\tau_k = \sum_{i=1}^k l_i$.  
 In what follows,  $
 \mathbf{P}$ is an integer valued law verifying
  \begin{equation} K(n) := \mathbf{P} (l_1 = n) =
 \frac{L(n)}{n^{1+\alpha}}, \label{1} \end{equation}
  where $L(.)$ is a
 slowly varying function. Furthermore, we allow transience for our renewal process, that is we may have $\sum_{n \geq 1} K(n)  < 1$ (or equivalently $\mathbf{P}(l_1 = \infty) > 0)$. For $N \in \mathbf{N}$, we introduce  $\overline{K}(N) := \sum_{n > N} K(n)$. We recall that a positive measurable function $L(.)$ is called slowly varying if for all $ c > 0$, we have $\lim_{x \rightarrow \infty} L(cx)/L(x) = 1$. Such functions are well known, see \cite{BGT}. One basic example  of slowly varying function (apart from the trivial case of constants) at infinity is $x \mapsto \log(1+x), x \in (0,\infty)$; but logarithmic functions are not the only examples, $x \mapsto \exp(a(\log(1+x))^c)$ is also slowly varying for every $ a \in \mathbf{R}$ and $c < 1$. \\  \indent 
 We then define the homogeneous pinning polymer law exactly as in \eqref{Def}.
  In all what follows, we will denote by $\tau_{(N)}$ the set $(\tau \cap[0,N])/N \subset [0,1]$. When we use this notation, it will be implicit in the sequel that the set $\tau$ has the law $\mathbf{P}_{N,\beta}$, where $\beta$ will be obvious from the context.\\ \indent
   This model has been essentially completely solved, see \cite{GG}; basic facts about it are the existence of a localization delocalization transition for a critical $\beta$, whose value is given by $\beta_c = -\log(\Sigma_K)$, where $\Sigma_K := \sum_{n \geq 1} K(n)$. In particular, we have the equivalence $\beta_c = 0$ iff the underlying renewal process is recurrent.
 This transition is actually defined in terms of the free energy, which is defined exactly as in the previous sections. 
  It is actually possible to give a very intuitive description of both phases in terms of the scaling limit of the zero level set. Intuitively, if the system is localized, we expect the set $\tau_{(N)}$ to converge in a suitable way to the full interval $[0,1]$. Similarly, if $\beta < \beta_c$, we expect it to converge to $\{0\}$. \\ 
 \indent To make such statements quantitative, we introduce $\mathcal{F}$ the set of closed subsets of $[0,1]$. We endow it with the topology of Matheron, which can be described as follows. For $F \in \mathcal{F}$ and $ t \in \mathbf{R}^+$, we set $d_t(F) := \inf( F \cap (t,\infty))$. Notice that $ t \mapsto d_t(F)$ is a right-continuous function and that $F$ can actually be identified with $d_{(\cdot)} (F)$ because $F = \{ t \in \mathbf{R}^+ : d_{t^-}(F) = t \}$. Then in terms of $d_{(\cdot)} (F)$, the Matheron topology is the standart Skorohod topology on c\`{a}dl\`{a}g functions taking values in $\mathbf{R}^+ \cup \{ + \infty\}$. We point out that with this topology, the space $\mathcal{F}$ is metrizable, separable and compact, hence in particular Polish. We denote by $\rho(\cdot)$ the Hausdorff metric given by  $\rho (F_1,F_2) :=
 \max_{i \in \{(1,2),(2,1)\}} \sup_{t \in F_{i_1}} \inf_{s \in F_{i_2}}
 |t-s|$ where  $F_1,F_2 \in \mathcal{F}$. It is then a well known result that this metric is equivalent to the metric engendered by the Skorohod distance via the identification through $d_{(\cdot)}$ on $\mathcal{F}$. Using this topology, both of the above statements have actually been proven in \cite{GG}. 
 The scaling limit at criticality (that is when $\beta = \beta_c$) is much richer than that. To describe it, we need the notion of \textit{$\alpha$ stable sets}. \\ 
   \indent 3.2 \textbf{ $\alpha$ regenerative sets and subordinators.}
Recall that a subordinator is a non decreasing L\'{e}vy process. A well-known fact about a subordinator $(\sigma_t)_{t \geq 0}$  is the existence of its so-called L\'{e}vy Khintchine exponent $\Phi(.)$, that is a measurable function verifying $\forall \lambda > 0, \forall t \geq 0$, $\mathbf{E} \left[ e^{-\lambda \sigma_t} \right] = e^{- t \Phi(\lambda) }$. When $\Phi(t) = t^{\alpha}$ for $\alpha \in (0,1)$, the associated subordinator is refered to as $\alpha$ stable, and we will denote it by $(\sigma_s^{(\alpha)})_{s \geq 0}$ in what follows. We already introduced its \textit{local time} in \eqref{DefTpsLoc}.  These objects are very well-known and a classic reference is \cite{Be}.
    We then define the set $\mathcal{A}_{\alpha} = \overline{ \{ \sigma_s^{(\alpha)}, s \geq 0 \} }$ ($\in \mathcal{F}$) the \textit{regenerative} set of index $\alpha$, where for a subset $ A \subset \mathbf{R}, \overline{A}$ is the closure of $A$. An important property that will turn out to be useful is the fact that $\mathbf{P} \left[ 1 \in \mathcal{A}_{\alpha} \right] =0$ (see \cite{Ke}).\\
  \indent 3.3 \textbf{Scaling limits at and near criticality.} In \cite{GG}, it was shown that, if $\beta = \beta_c$, we have the convergence: 
\begin{equation}
  \tau_{(N)}  \Rightarrow \mathcal{A}_{\alpha}
 \end{equation}
  in the recurrent case (that is $K(\infty) = 0$), and in the transient case 
 \begin{equation}
  \tau_{(N)}  \Rightarrow \tilde{\mathcal{A}}_{\alpha},
 \end{equation} where $\tilde{\mathcal{A}}_{\alpha}$ is a random
 subset of $[0,1]$ whose law is absolutely continuous with respect to
 the law of $\mathcal{A}_{\alpha}$ with Radon-Nykodym density equal to
 $ \left(\alpha \pi / \sin(\alpha \pi) \right) (1-\max(\mathcal{A}_{\alpha}
 \cap [0,1]))^{\alpha}$.\\ \indent 
   A first step towards this convergence has been made in \cite{IY} in a discrete random walk set-up; they actually proved the convergence of the entire process at criticality towards the brownian motion. This work has been strongly generalized in \cite{DGZ}, actually being the most important part of the proof of the convergence of more general pinning models towards the reflected brownian motion at criticality. This was in turn generalized in \cite{CGZ} using powerful renewal techniques, on which the present work is based.  \\
  \indent Let us focus for the moment on the recurrent case, that is we assume for the moment $K(\infty)=0$.  
   First of all, we recall the following result (see \cite{Fe},XIII.6 Theorem 2): it is
   possible to choose a sequence $(a_n)_{n \geq 0}$ such that 
  
   \begin{equation}
  \frac{n \Gamma(1-\alpha)L(a_n)}{ \alpha a_n^{\alpha}} \rightarrow 1, \label{2} 
    \end{equation}  
  and furthermore, as soon as a sequence $(a_n)_{n \geq 0}$ verifies \eqref{2}, we have the convergence :
  \begin{equation}
  \frac{\sum_{i=1}^n l_i}{a_n} \Rightarrow \sigma_1^{(\alpha)} \label{3}   
  \end{equation}  
 where the $(l_i)_{i \geq 0}$ are iid with common law $K(.)$ satisfying \eqref{1}. We then define the sequence $(b_n)_{n \geq 0}$ to be the
 inverse sequence of the $(a_n)_{n \geq 0}$, that is the unique sequence, up to
 asymptotic equivalence, which verifies
 \begin{equation} \label{10}
    a_{[b_n]} \sim b_{[a_n]} \sim n 
 \end{equation}
 as $n \rightarrow \infty$. Its existence is ensured by Theorem
 1.5.12, page 28 of \cite{BGT}.
It is rather easy to see
 that $b_n \sim  (\Gamma(1-\alpha) \overline{K}(n))^{-1}$
 as $n \rightarrow \infty$. Let us quickly prove this fact. Using the definition of
 $(a_n)_{n \geq 0}$, we get
 \begin{equation}
 \frac{b_n \Gamma(1-\alpha) L(a_{[b_n]})}{\alpha a_{[b_n]}^{\alpha}} \sim
 \frac{b_n  \Gamma(1-\alpha) L(n)}{\alpha n^{\alpha}} \sim 1,
 \end{equation}
   and then we note that $ \overline{K}(N) \sim L(N)(\alpha N^{\alpha})^{-1}$ (see \cite{GG},
 P.201).     
 We also used the fact that $x_n \sim y_n$ yields $L(x_n)
 \sim L(y_n)$ as $ n \rightarrow \infty$ for $(x_n)_{n \geq 0}$ and $(y_n)_{n \geq 0}$ positive
 sequences. For $n$ large enough, $x_n/y_n \in [1-\eta;
 1+\eta]$ for a given $\eta > 0$, and the fact that the convergence
 $L(cx)/L(x) \rightarrow 1 $ is uniform in $c$ on every compact
   subset of $\mathbf{R}^+$ (a basic result on slowly varying functions, see \cite{BGT}) yields the assumption.\\ \indent 
 
  To deal with the transient case, we introduce the recurrent law $\mathbf{\tilde{P}}$ given by 
  \begin{equation}
  \mathbf{\tilde{P}}(l_1 = n) := \tilde{K}(n) = \frac{L(n) e^{\beta_c}}{n^{1 + \alpha}} = \frac{L(n) }{n^{1 + \alpha} \Sigma_K}.
\end{equation}  \indent
  We have then exactly the same statements as in Eq.\eqref{2} and \eqref{3} making the obvious changes, and we introduce the sequence $(\tilde{b}_n)_{n \geq 0}$, the analogous of $(b_n)_{n \geq 0}$ for the new sequence  $(\tilde{a}_n)_{n \geq 0}$. We are now ready to state our main result: 
    \begin{theorem}
      Let $\varepsilon \in \mathbf{R}$. The following statements hold:
      
    \begin{enumerate}
    \item if $ K(\infty) = 0$, if $\tau$ is distributed according to $\mathbf{P}_{\beta_c+\varepsilon/b_N,N}$, then 
      \begin{equation}
       \tau_{(N)}
      \Rightarrow \mathcal{B}_{\alpha,\varepsilon},  
      \end{equation}
 where $\mathcal{B}_{\alpha,\varepsilon} (\subset [0,1])$ is the random
 set whose law is absolutely continuous with respect to the law of
 $\mathcal{A}_{\alpha}$ whith Radon Nykodym density equal to
 $\exp(\varepsilon L^{(\alpha)}_1)/\mathbf{E}[\exp(\varepsilon L^{(\alpha)}_1)]$. 
    \item  if $K(\infty) > 0$, if $\tau$ is distributed according to $\mathbf{P}_{\beta_c+\varepsilon/\tilde{b}_N,N}$, then
   \begin{equation}
       \tau_{(N)}
      \Rightarrow \tilde{\mathcal{B}}_{\alpha,\varepsilon},  
      \end{equation} 
     where  $\tilde{\mathcal{B}}_{\alpha,\varepsilon} (\subset [0,1])$  has a Radon-Nykodym density given
 by \newline 
 $\left(\exp(\varepsilon L^{(\alpha)}_1)    (1-\max(\mathcal{A}_{\alpha}
 \cap [0,1]))^{\alpha} \right) / \left(\mathbf{E}[\exp(\varepsilon L^{(\alpha)}_1)  (1-\max(\mathcal{A}_{\alpha}
 \cap [0,1]))^{\alpha}] \right)$. 
    \end{enumerate}
 \end{theorem}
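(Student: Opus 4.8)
The plan is to reduce the statement, in both the recurrent and the transient case, to a single joint scaling limit \emph{at} $\beta_c$ together with a uniform integrability estimate, exploiting the fact that $\mathbf{P}_{N,\beta_c+\varepsilon/b_N}$ differs from $\mathbf{P}_{N,\beta_c}$ only through the explicit tilt $\exp((\varepsilon/b_N)\mathcal{N}_N)$ — this is precisely the feature that makes the homogeneous model ``exactly solvable''. Since $\mathcal{F}$ endowed with the Matheron topology is compact, no tightness argument is needed: it suffices to show that $\mathbf{E}_{N,\beta_c+\varepsilon/b_N}[G(\tau_{(N)})]$ converges, for every bounded continuous $G:\mathcal{F}\to\mathbf{R}$, to the integral of $G$ against the announced law. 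Spelling out the Radon--Nykodym derivative defining the polymer measure gives
\[
\mathbf{E}_{N,\beta_c+\varepsilon/b_N}\left[G(\tau_{(N)})\right]=\frac{\mathbf{E}_{N,\beta_c}\left[G(\tau_{(N)})\,e^{(\varepsilon/b_N)\mathcal{N}_N}\right]}{\mathbf{E}_{N,\beta_c}\left[e^{(\varepsilon/b_N)\mathcal{N}_N}\right]},
\]
and in the recurrent case $\beta_c=0$, so $\mathbf{P}_{N,\beta_c}=\mathbf{P}$ (in the transient case one uses the same identity with $\tilde b_N$).

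The core step is to establish, under $\mathbf{P}_{N,\beta_c}$, the joint convergence $\left(\tau_{(N)},\,\mathcal{N}_N/b_N\right)\Rightarrow\left(\mathcal{A}_\alpha,\,L^{(\alpha)}_1\right)$ in the recurrent case (and $\left(\tau_{(N)},\,\mathcal{N}_N/\tilde b_N\right)\Rightarrow\left(\tilde{\mathcal{A}}_\alpha,\,L^{(\alpha)}_1\right)$ in the transient case), where $L^{(\alpha)}_1$ is the local time \eqref{DefTpsLoc} of the \emph{same} subordinator whose closed range is $\mathcal{A}_\alpha$, resp.\ $\tilde{\mathcal{A}}_\alpha$. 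The two one-dimensional marginals are already available: $\tau_{(N)}\Rightarrow\mathcal{A}_\alpha$ (resp.\ $\tilde{\mathcal{A}}_\alpha$) is the convergence at criticality proved in \cite{GG}, and $\mathcal{N}_N/b_N\Rightarrow L^{(\alpha)}_1$ is Lemma 1. To get the joint statement one cannot treat $\mathcal{N}_N$ as a continuous functional of $\tau_{(N)}$ (it is not Matheron-continuous); instead I would go back to the renewal path and use the functional limit theorem $(\tau_{[b_N s]}/N)_{s\ge0}\Rightarrow(\sigma^{(\alpha)}_s)_{s\ge0}$ in the Skorohod space, which follows from \eqref{3} and the definition \eqref{10} of $b_N$. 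Applying the arguments already used in \cite{GG} for $\tau_{(N)}\Rightarrow\mathcal{A}_\alpha$ (the closed-range functional) and in Lemma 1 (the first-passage-above-$1$ functional $\omega\mapsto\inf\{s:\omega_s\ge1\}$) jointly to this one approximating path yields the joint convergence; the relevant functionals are continuous at $\mathbf{P}$-almost every stable trajectory precisely because $\mathbf{P}[1\in\mathcal{A}_\alpha]=0$ (see \cite{Ke}). In the transient case the same scheme is run starting from the description of the law of $\tau_{(N)}$ under $\mathbf{P}_{N,\beta_c}$ obtained in \cite{GG} via the recurrent renewal $\tilde{\mathbf{P}}$, which is what produces the extra factor $\left(\alpha\pi/\sin(\alpha\pi)\right)(1-\max(\mathcal{A}_\alpha\cap[0,1]))^{\alpha}$.

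Granting this, $G(\tau_{(N)})\,e^{(\varepsilon/b_N)\mathcal{N}_N}$ converges in law to $G(\mathcal{A}_\alpha)\,e^{\varepsilon L^{(\alpha)}_1}$; for $\varepsilon\le0$ it is bounded by $\|G\|_\infty$, and for $\varepsilon>0$ the family $\big(e^{(\varepsilon/b_N)\mathcal{N}_N}\big)_N$ is uniformly integrable by Lemma 2, hence so are these products, $G$ being bounded. In both cases the expectations converge, and applying this in the numerator and (with $G\equiv1$) in the denominator gives
\[
\mathbf{E}_{N,\beta_c+\varepsilon/b_N}\left[G(\tau_{(N)})\right]\ \longrightarrow\ \frac{\mathbf{E}\left[G(\mathcal{A}_\alpha)\,e^{\varepsilon L^{(\alpha)}_1}\right]}{\mathbf{E}\left[e^{\varepsilon L^{(\alpha)}_1}\right]}=\mathbf{E}\left[G(\mathcal{B}_{\alpha,\varepsilon})\right],
\]
with $\mathbf{E}[e^{\varepsilon L^{(\alpha)}_1}]<\infty$ since $L^{(\alpha)}_1$ has a Mittag--Leffler law; this is (1). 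For (2) the same passage to the limit produces a law absolutely continuous with respect to that of $\tilde{\mathcal{A}}_\alpha$ with density proportional to $e^{\varepsilon L^{(\alpha)}_1}$; substituting the Radon--Nykodym derivative of the law of $\tilde{\mathcal{A}}_\alpha$ with respect to that of $\mathcal{A}_\alpha$ recalled above, the constant $\alpha\pi/\sin(\alpha\pi)$ cancels between numerator and denominator and one is left with density
\[
\frac{e^{\varepsilon L^{(\alpha)}_1}\,(1-\max(\mathcal{A}_\alpha\cap[0,1]))^{\alpha}}{\mathbf{E}\left[e^{\varepsilon L^{(\alpha)}_1}\,(1-\max(\mathcal{A}_\alpha\cap[0,1]))^{\alpha}\right]}
\]
with respect to the law of $\mathcal{A}_\alpha$, as claimed.

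The two substantial inputs are exactly those borrowed above. The first is the joint convergence of the second step: because the contact number is not a Matheron-continuous functional of the zero set, it must be handled at the level of the stable subordinator path, and one needs the a.e.\ continuity, on stable trajectories, of the closed-range map and of the first-passage map. The second — and the real analytic heart, isolated as Lemma 2 — is the uniform integrability of $\big(e^{(\varepsilon/b_N)\mathcal{N}_N}\big)_N$ for $\varepsilon>0$, equivalently a uniform exponential moment bound $\sup_N\mathbf{E}[e^{(p\varepsilon/b_N)\mathcal{N}_N}]<\infty$ for some $p>1$; this requires a sharp enough control of the upper tail $\mathbf{P}(\mathcal{N}_N\ge k\,b_N)$, obtained by iterating a one-step renewal estimate (or, in concrete instances, from the explicit law of $\mathcal{N}_N$). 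I expect this uniform integrability to be the main obstacle.
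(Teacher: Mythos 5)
Your proposal is correct and follows essentially the same route as the paper: the tilt identity, the joint convergence of $(\tau_{(N)},|\tau_{(N)}|/b_N)$ towards $(\mathcal{A}_{\alpha},L^{(\alpha)}_1)$ (the paper's Lemma 3, proved there via a Poissonized subordinator whose L\'evy exponent converges, rather than your direct functional limit theorem for $(\tau_{[b_Ns]}/N)_{s\ge0}$, but resting on the same a.s.\ continuity of the range and first-passage functionals at stable paths, which uses $\mathbf{P}[1\in\mathcal{A}_{\alpha}]=0$), the uniform integrability of Lemma 2, and, in the transient case, the rewriting under the recurrent law $\tilde{\mathbf{P}}$ with the boundary factor $\frac{\alpha\pi}{\sin(\alpha\pi)}(1-\max\tau_{(N)})^{\alpha}$. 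The only detail the paper handles more explicitly than you do is that this boundary asymptotics is uniform only on $\{\max\tau_{(N)}<1-\eta\}$, so one first restricts to that event and then lets $\eta\searrow0$ using $\max(\mathcal{A}_{\alpha}\cap[0,1])<1$ almost surely.
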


 \section{Proofs}
 
  \begin{lemma} If $K(\infty) = 0$, the following convergence holds:
   \begin{equation} 
     \frac{ |\tau_{(N)}|}{b_N} \Rightarrow L_1^{(\alpha)}.
   \end{equation}  \indent
   \end{lemma}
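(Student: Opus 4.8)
The plan is to identify $|\tau_{(N)}|$ with the renewal counting function $\mathcal{N}_N := \#\{k\ge 0:\tau_k\le N\}$ (recall that in the recurrent case $\beta_c=0$, so that $\mathbf{P}_{N,\beta_c}$ is just the reference renewal law $\mathbf{P}$) and to deduce the statement by inverting the stable limit theorem \eqref{3}. First I would use the elementary identity $\{\mathcal{N}_N\ge k\}=\{\tau_{k-1}\le N\}$, valid for every integer $k\ge1$, which gives, for a fixed $x>0$,
\[
\mathbf{P}\!\left(\frac{\mathcal{N}_N}{b_N}\ge x\right)=\mathbf{P}\!\left(\tau_{m_N}\le N\right),\qquad m_N:=\lceil x b_N\rceil-1 .
\]
Since $K(\infty)=0$ we have $b_N\to\infty$, hence $m_N\to\infty$ and $m_N\sim x b_N$ as $N\to\infty$.

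Next I would control the norming: by \eqref{2} the sequence $(a_n)$ is regularly varying of index $1/\alpha$, and by the definition \eqref{10} of the inverse sequence $a_{[b_N]}\sim N$; combining these, $a_{m_N}\sim x^{1/\alpha}a_{[b_N]}\sim x^{1/\alpha}N$, so $N/a_{m_N}\to x^{-1/\alpha}$. Applying \eqref{3} along the deterministic subsequence $m_N$ gives $\tau_{m_N}/a_{m_N}\Rightarrow\sigma_1^{(\alpha)}$, and since $\sigma_1^{(\alpha)}$ has a continuous, strictly positive density on $(0,\infty)$, a standard converging-together argument (sandwich $N/a_{m_N}$ between $x^{-1/\alpha}\pm\delta$, use weak convergence, then let $\delta\downarrow0$) yields
\[
\lim_{N\to\infty}\mathbf{P}\!\left(\frac{\mathcal{N}_N}{b_N}\ge x\right)=\mathbf{P}\!\left(\sigma_1^{(\alpha)}\le x^{-1/\alpha}\right).
\]
Finally, the inverse relation \eqref{DefTpsLoc} together with the scaling property $\sigma_x^{(\alpha)}\stackrel{d}{=}x^{1/\alpha}\sigma_1^{(\alpha)}$ gives $\mathbf{P}(L_1^{(\alpha)}\le x)=\mathbf{P}(\sigma_x^{(\alpha)}\ge1)=\mathbf{P}(\sigma_1^{(\alpha)}\ge x^{-1/\alpha})$; as $L_1^{(\alpha)}$ also has a density (the Mittag--Leffler law), the previous display reads exactly $\mathbf{P}(\mathcal{N}_N/b_N\ge x)\to\mathbf{P}(L_1^{(\alpha)}\ge x)$ for every $x>0$, the cases $x\le0$ being trivial. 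Since the limiting distribution function is continuous, this pointwise convergence of the tails upgrades to the weak convergence $\mathcal{N}_N/b_N\Rightarrow L_1^{(\alpha)}$; the only care needed, due to the discreteness of $\mathcal{N}_N$, is the elementary squeeze $\mathbf{P}(\mathcal{N}_N/b_N\ge x+\delta)\le\mathbf{P}(\mathcal{N}_N/b_N>x)\le\mathbf{P}(\mathcal{N}_N/b_N\ge x)$ followed by $\delta\downarrow0$.

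The steps above are routine; the points requiring some attention are (i) the asymptotics $a_{m_N}\sim x^{1/\alpha}N$, which uses the uniform convergence theorem for slowly varying functions, exactly as in the derivation of $b_n\sim(\Gamma(1-\alpha)\overline{K}(n))^{-1}$ given earlier in the text, and (ii) the passage from convergence of tails to genuine weak convergence in the presence of the lattice variable $\mathcal{N}_N$. I do not expect a real obstacle here; an alternative and perhaps cleaner route is to argue at the process level: $(\tau_{[b_N s]}/N)_{s\ge0}\Rightarrow(\sigma^{(\alpha)}_s)_{s\ge0}$ in the Skorokhod topology (again by \eqref{3} and regular variation of $(a_n)$), $\mathcal{N}_N/b_N$ equals, up to a negligible error, the value at $t=1$ of the generalised inverse of $s\mapsto\tau_{[b_N s]}/N$, and this inversion map is continuous at strictly increasing limit paths, so the continuous mapping theorem delivers $\mathcal{N}_N/b_N\Rightarrow L_1^{(\alpha)}$ at once.
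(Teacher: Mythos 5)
Your argument is correct and is essentially the paper's own proof: you rewrite the tail event for $|\tau_{(N)}|/b_N$ as an event for the renewal sum $\tau_{[xb_N]}$, apply the stable limit theorem \eqref{3} together with the regular-variation asymptotics $a_{[xb_N]}\sim x^{1/\alpha}N$ and continuity of the law of $\sigma_1^{(\alpha)}$, and identify the limit via the scaling property and the inverse relation \eqref{DefTpsLoc} defining $L_1^{(\alpha)}$. The only differences (working with $\ge x$ tails instead of $\le M$ events, and the explicit discreteness squeeze) are cosmetic.
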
 
   \begin{proof} For $M > 0 $, we have the equality:
    \begin{equation} \label{LC}
      \mathbf{P} \left( \frac{ |\tau_{(N)}|}{b_N} \leq M \right) =
           \mathbf{P} \left(\frac{1}{a_{[Mb_N]}} \sum_{i=1}^{[Mb_N]}
             l_i \geq \frac{N}{a_{[Mb_N]}} \right).
   \end{equation}  The right hand side of \eqref{LC} is easily seen to converge towards $\mathbf{P} \left( M^{\frac{1}{\alpha}} \sigma_1^{(\alpha)} \geq 1
       \right)$. Actually, we have  $
   N/a_{[Mb_N]} \rightarrow M^{-\frac{1}{\alpha}}$ as $ N
     \rightarrow \infty$ (since $Mb_N \sim b_{[M^{\frac{1}{\alpha}}N]}$), and it is easy to see that, if $ X_n \Rightarrow X$, if a 
     deterministic sequence $u_n \rightarrow u$ and the distribution of $X$ is continuous, then $
     \mathbf{P} (X_n \geq u_n) \rightarrow \mathbf{P} (X \geq
     u)$. So we just have to recall that the distribution of $\sigma_1^{(\alpha)}$ is continuous, which is actually a well known fact about stable subordinators (see \cite{Le}, P.271). \\ 
         \indent Because of the scaling property of the stable subordinator, we get 
   \begin{equation*}
     \mathbf{P} \left( M^{\frac{1}{\alpha}} \sigma_1^{(\alpha)} \geq 1
       \right) = \mathbf{P}( \sigma_M^{(\alpha)} \geq 1) 
   \end{equation*}
  
  \begin{equation}
     = \mathbf{P}(L_1^{(\alpha)} \leq M),
   \end{equation} which proves our claim. \end{proof}

  \begin{lemma} For $K(\infty)=0$ and for all $\varepsilon > 0 $, the family
  $ \big(\exp(\frac{ \varepsilon |\tau_{(N)}|}{b_N}) \big)_{N \geq 0}$ is uniformly
  integrable.
    \end{lemma}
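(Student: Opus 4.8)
The plan is to obtain uniform integrability by producing, for some fixed $\delta>0$, a uniform bound on the exponential moments $\mathbf{E}\big[\exp(\varepsilon(1+\delta)|\tau_{(N)}|/b_N)\big]$; by de la Vall\'ee Poussin this gives the claim. The natural route is to control the upper tail of $|\tau_{(N)}|/b_N$ uniformly in $N$. Recall from the proof of Lemma 1 the exact identity
\begin{equation}
 \mathbf{P}\left(\frac{|\tau_{(N)}|}{b_N}\geq M\right)=\mathbf{P}\left(\frac{1}{a_{[Mb_N]}}\sum_{i=1}^{[Mb_N]}l_i\leq \frac{N}{a_{[Mb_N]}}\right),
\end{equation}
and that $N/a_{[Mb_N]}\to M^{-1/\alpha}$. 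So the event on the right says that a sum of $[Mb_N]$ i.i.d.\ heavy-tailed variables is \emph{smaller} than a constant (of order $M^{-1/\alpha}$) times $a_{[Mb_N]}$, i.e.\ a lower-deviation (small-ball) event for sums in the domain of attraction of $\sigma_1^{(\alpha)}$. The heart of the argument is therefore a uniform-in-$N$ estimate showing this probability decays fast enough in $M$ — specifically, one wants a bound of the form $\mathbf{P}(|\tau_{(N)}|/b_N\geq M)\leq C\exp(-cM^{1/(1-\alpha)})$ or at the very least $\leq C\exp(-cM)$ for all large $M$ and all $N$, which already suffices since any exponential-in-$M$ decay beats $e^{\varepsilon M}$.

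The concrete steps I would carry out: (1) Fix a renewal function / mass-function lower bound: since $K(n)=L(n)n^{-1-\alpha}$ with $K(\infty)=0$, one has $\mathbf{P}(l_1\geq k)=\overline{K}(k)\sim L(k)(\alpha k^\alpha)^{-1}$, so for any $A$ there is $c_A>0$ with $\mathbf{P}(l_1\geq A a_n)\geq c_A n^{-1}$ roughly; more usefully, the event $\{\sum_1^m l_i\leq x\}$ forces all $m$ increments to be at most $x$, and the many-small-increments regime can be bounded by a large-deviation argument for the truncated variables $l_i\wedge x$. The cleanest implementation is via the Laplace transform: for $\lambda>0$, by Markov $\mathbf{P}(\sum_1^{m}l_i\leq y)\leq e^{\lambda y}\,\mathbf{E}[e^{-\lambda l_1}]^{m}=e^{\lambda y}e^{-m\Phi_N(\lambda)}$ where $\Phi_N$ is the Laplace exponent of the (discrete) increment law. (2) Use the domain-of-attraction scaling: with $m=[Mb_N]$, $y\asymp M^{-1/\alpha}a_m\asymp N$, and choosing $\lambda\asymp 1/N$, the discrete Laplace exponent satisfies $\Phi_N(\lambda)\sim \Gamma(1-\alpha)\lambda^\alpha L(1/\lambda)\asymp N^{-\alpha}L(N)$ (this is the discrete analogue of $\Phi(\lambda)=\lambda^\alpha$, and is exactly the estimate underlying \eqref{2}–\eqref{3}); then $m\Phi_N(\lambda)\asymp Mb_N N^{-\alpha}L(N)\asymp M$ since $b_N\sim(\Gamma(1-\alpha)\overline K(N))^{-1}\asymp N^\alpha/L(N)$. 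Optimizing $\lambda$ over this range yields exactly the stretched-exponential decay $\exp(-cM^{1/(1-\alpha)})$ uniformly in $N$. (3) Integrate the tail bound: $\mathbf{E}[\exp(\varepsilon(1+\delta)|\tau_{(N)}|/b_N)]=1+\varepsilon(1+\delta)\int_0^\infty e^{\varepsilon(1+\delta)M}\mathbf{P}(|\tau_{(N)}|/b_N\geq M)\,dM$, which is finite and bounded uniformly in $N$ by the step-(2) estimate. Conclude uniform integrability.

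The main obstacle is step (2): making the Laplace-exponent estimate \emph{uniform in $N$} rather than just pointwise. The subtlety is that $\lambda$ must be taken in an $N$-dependent range ($\lambda\asymp 1/N$, or more precisely $\lambda\asymp M^{-1/\alpha}/N$ when one optimizes), so one needs a locally uniform version of the asymptotics $\Phi_N(\lambda)\sim\Gamma(1-\alpha)\lambda^\alpha L(1/\lambda)$ as $\lambda\downarrow0$ — this follows from Karamata/Tauberian theory (uniform convergence of slowly varying functions on compacts, as already invoked in the excerpt for the $b_n$ asymptotics) applied to the tail $\overline K$, but the bookkeeping tying together $M$, the optimal $\lambda$, and the resulting exponent $M^{1/(1-\alpha)}$ requires care. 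An alternative that sidesteps some of this, and may be what the author intends: relate $|\tau_{(N)}|$ directly to the renewal process and cite a known local large-deviation bound for regenerative sets / renewal counts with regularly varying tails (e.g.\ from \cite{CGZ} or \cite{GG}), which would give the tail estimate essentially for free; then only step (3) remains.
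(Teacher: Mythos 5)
Your proposal follows essentially the same route as the paper's proof: reduce uniform integrability to a uniform-in-$N$ exponential moment bound, rewrite $\{|\tau_{(N)}|/b_N\geq M\}$ as the lower-deviation event $\{\sum_{i\leq [Mb_N]}l_i\leq N\}$, apply the Chernoff bound with $\mathbf{E}[e^{-\lambda l_1}]$, use the Karamata-type asymptotics $-\log\mathbf{E}[e^{-\lambda l_1}]\sim \frac{\Gamma(1-\alpha)}{\alpha}\lambda^{\alpha}L(1/\lambda)$ with $\lambda\asymp u/N$ (uniformity via slow variation on compacts), and integrate the tail. One small caution: your remark that ``any exponential-in-$M$ decay beats $e^{\varepsilon M}$'' is not correct for a fixed small rate $c$; you need (as the paper arranges, and as your optimization over $u=\lambda N$ in a fixed compact range in fact yields) an exponential bound with \emph{arbitrarily large} rate, or the stretched-exponential bound, so the full $M$-dependent optimization of $\lambda$ is unnecessary and its uniformity issues can be avoided.
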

  
 \begin{proof}  It is enough to show that, for all $a >0$, we have 

 \begin{equation} \label{UI1}
 \sup_{N \in \mathbf{N}} \mathbf{E} \left[ e^{a |\tau_{(N)}|/b_N} \right]   < \infty.
\end{equation}
     As 
     \begin{equation}   \label{UI2}
      \mathbf{E} \left[ e^{a|\tau_{(N)}|/b_N} \right] = \int_1^{\infty} \mathbf{P} \left[ |\tau_{(N)}|/b_N > 1/a \log(x) \right] \textrm{d}x,
      \end{equation}
       to show \ref{UI1}, we just have to see that for all $C_1 >0$, there exists $C_2 >0$ such that  for $y \geq 1$,  
       \begin{equation} \label{AV}
     \mathbf{P} \left[ |\tau_{(N)}|/b_N > y \right] \leq C_2 \exp( -C_1 y),
      \end{equation}
        as soon as $N$ is large enough.    
     Using the Markov inequality, we see that for  every $
    \lambda >0$ and $y \geq 1$ :
    \begin{equation} \label{MI}
         \mathbf{P} \left( \frac{|\tau_{(N)})|}{b_N} > y \right) =
         \mathbf{P} \left(
  \sum_{i=1}^{[yb_N]} l_i < N \right) \leq  \mathbf{E} \left[ e^{-\lambda l_1} \right]^{[yb_N]} e^{\lambda N}. 
  \end{equation}
       Noting that $\tau$ is recurrent, we have :
        \begin{equation}
        \log(\mathbf{E}[e^{-\lambda l_1}]) = \log  \left( 1 - L(1/\lambda) \lambda^{1 + \alpha} \sum_{n \geq 1} \frac{1-e^{-\lambda n } }{(\lambda n)^{1+\alpha}} \frac{ L(\frac{n\lambda}{\lambda})} {L(1/\lambda)} \right).
         \end{equation}
      For any $\eta > 0$ and using the uniform convergence property on compact sets for slowly varying functions, one has the equivalence (for $\lambda \searrow 0$)
         \begin{equation}
         \lambda \sum_{n\lambda  \geq \eta}^{1/\eta} {1-e^{-\lambda n } \over (\lambda n)^{1+\alpha}} \frac{ L({n\lambda \over \lambda})} {L(1/\lambda)} \sim \int_{\eta}^{1/\eta} \frac{1-e^{x}}{x^{1+ \alpha}} dx,
          \end{equation}
        and it is easily seen that this implies the equivalence (still for  $\lambda \searrow 0$):
         \begin{equation}  \label{EQV} -\log(\mathbf{E}[e^{-\lambda l_1}])
    \sim  \Gamma(1-\alpha)/\alpha \lambda^{\alpha}
    L(1/\lambda).
     \end{equation}

        This entails that there exists $ c(\alpha) > 0$ and $\lambda_0(\alpha) > 0$ such that for all  $  \lambda \in (0,\lambda_0)$, we have $ -\log(\mathbf{E}[e^{-\lambda l_1}])
    \geq c(\alpha) \lambda^{\alpha}
    L(1/\lambda)$. Note that  $ (b_N L(N)N^{-\alpha})_{N \geq 0}$ is a positive bounded sequence. Then, using \ref{MI}, we
    get, for every $N$:      
    
 \begin{equation} \label{UI3}
 \mathbf{P} \left(  \frac{|\tau_{(N)})|}{b_N} > y \right) \leq
   \inf_{0 \leq \lambda \leq \lambda_0} \exp \left( - c(\alpha) \lambda^{\alpha}L(1/\lambda) y \frac{N^{\alpha}}{L(N)} + \lambda N \right),
\end{equation}
  possibly modifying $c(\alpha)$ in doing so. Then we consider $N_0$ such that for $N \geq N_0$, $C_1/N < \lambda_0$, and we give us $C_2 \in (C_1; N_0  \lambda_0)$. For $u$ in the compact set $\left[ C_1,C_2 \right]$, the convergence of $L(N/u)/L(N)$ towards $1$ is uniform, so that, for $\eta > 0$, for $N$ large enough, $L(N/\lambda N)/L(N) \geq 1 - \eta $ as long as $ C_1/N \leq \lambda \leq C_2/N$. Thus, for $N$ large:
  \begin{equation}
    \mathbf{P} \left(  \frac{|\tau_{(N)})|}{b_N} > y \right) \leq \inf_{C_1/N \leq \lambda \leq C_2/N} \exp \left( -c(\alpha)(1-\eta) (\lambda N)^{\alpha} y + \lambda N \right).
  \end{equation}
   And this last inequality clearly entails \ref{AV}, and thus the lemma.
    \end{proof} 
 
 \begin{lemma}
 For $K(\infty) = 0$, we have the convergence: 
  \begin{equation} \label{AM}
 ( |\tau_{(N)}|/b_N,\tau_{(N)}) \Rightarrow 
   (L_1^{(\alpha)},\mathcal{A}_{\alpha}).
\end{equation}
 \end{lemma}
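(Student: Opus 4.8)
The plan is to realize \emph{both} coordinates of the pair as almost-surely continuous functionals of one and the same path-valued object, to prove a functional invariance principle for that object, and to conclude by the mapping theorem. Introduce the random element $\mathcal{S}^N := ( \tau_{[b_N t]}/N )_{t \geq 0}$ of the Skorohod space $D([0,\infty),\mathbf{R}^+)$, i.e. the nondecreasing pure-jump path obtained by running the renewal on the time scale $b_N$ and the space scale $N$. Two elementary identities make the reduction \emph{exact}: since $\tau\cap[0,N]$ is finite, $\overline{\{\mathcal{S}^N_s : s\geq 0\}}\cap[0,1] = \{\tau_n/N : \tau_n\leq N\} = \tau_{(N)}$; and, writing $\mathcal{N}_N = \sup\{n\geq 0 : \tau_n\leq N\}+1$ (recall $\tau_0=0$) and using that $[b_N s]\geq m \Leftrightarrow b_N s\geq m$ for integers $m$, one gets $\inf\{s\geq 0 : \mathcal{S}^N_s > 1\} = \inf\{s : [b_N s]\geq \mathcal{N}_N\} = \mathcal{N}_N/b_N = |\tau_{(N)}|/b_N$. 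Hence, defining on nondecreasing c\`{a}dl\`{a}g paths $w$ the maps $\mathcal{R}(w) := \overline{\{w(s):s\geq 0\}}\cap[0,1]\in\mathcal{F}$ and $\Lambda(w) := \inf\{s\geq 0 : w(s)>1\}\in\mathbf{R}^+$, one has the identity $(|\tau_{(N)}|/b_N,\tau_{(N)}) = (\Lambda(\mathcal{S}^N),\mathcal{R}(\mathcal{S}^N))$, so it suffices to control the law of the single path $\mathcal{S}^N$.

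The next step is the functional invariance principle $\mathcal{S}^N\Rightarrow\sigma^{(\alpha)}$ in $D([0,\infty),\mathbf{R}^+)$ equipped with the $J_1$ topology. By \eqref{2}, $(a_n)_{n\geq 0}$ is regularly varying of index $1/\alpha$, and by \eqref{3}, $\sum_{i=1}^n l_i/a_n\Rightarrow\sigma^{(\alpha)}_1$; the classical functional form of the stable limit theorem for i.i.d. sums in the domain of attraction of a stable law (see e.g. \cite{Be}, and \cite{GG} for the renewal setting) upgrades this to $(\sum_{i=1}^{[b_N t]} l_i/a_{[b_N]})_{t\geq 0}\Rightarrow(\sigma^{(\alpha)}_t)_{t\geq 0}$. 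Since $a_{[b_N]}\sim N$ by \eqref{10}, multiplying by the deterministic factor $a_{[b_N]}/N\to 1$ preserves the convergence and gives $\mathcal{S}^N\Rightarrow\sigma^{(\alpha)}$; this is exactly the input already used in \cite{GG} to obtain $\tau_{(N)}\Rightarrow\mathcal{A}_\alpha$.

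It then remains to verify that $w\mapsto(\Lambda(w),\mathcal{R}(w))$ is continuous at $\mathbf{P}$-almost every trajectory of $\sigma^{(\alpha)}$, and to apply the mapping theorem. Two facts enter: $\sigma^{(\alpha)}$ is a.s. strictly increasing with $\sigma^{(\alpha)}_t\to\infty$, so that its generalized inverse $L^{(\alpha)}$ from \eqref{DefTpsLoc} is continuous and finite; and $\mathbf{P}(1\in\mathcal{A}_\alpha)=0$ (\cite{Ke}). On the full-measure event where both hold, the path $w=\sigma^{(\alpha)}(\omega)$ is a continuity point of $\Lambda$ (level $1$ is passed either continuously or by a genuine jump, and $J_1$-close paths pass it at nearby times, with limiting value $L^{(\alpha)}_1(\omega)$) and of $\mathcal{R}$ for the Matheron topology (the continuity property of the closed-range map exploited in \cite{GG} and \cite{CGZ}, with $\mathbf{P}(1\in\mathcal{A}_\alpha)=0$ precisely ruling out pathological behaviour at the right endpoint). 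The mapping theorem applied to $\mathcal{S}^N\Rightarrow\sigma^{(\alpha)}$ then yields $(|\tau_{(N)}|/b_N,\tau_{(N)}) = (\Lambda,\mathcal{R})(\mathcal{S}^N)\Rightarrow(\Lambda,\mathcal{R})(\sigma^{(\alpha)}) = (L^{(\alpha)}_1,\mathcal{A}_\alpha)$, which is \eqref{AM}.

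The step I expect to be the real obstacle is this last one: pinning down the null set off which $(\Lambda,\mathcal{R})$ is \emph{jointly} continuous, and in particular the Matheron-continuity of the closed-range map near the level $1$, where the fact $\mathbf{P}(1\in\mathcal{A}_\alpha)=0$ is indispensable and the step-function nature of $\mathcal{S}^N$ has to be handled with some care. By comparison, the invariance principle of the second paragraph and the two exact identities of the first are routine; note also that the cardinality $|\tau_{(N)}|$ is \emph{not} a continuous functional of $\tau_{(N)}$ in the Matheron topology, which is why passing through the path $\mathcal{S}^N$ (rather than combining the two known marginal convergences) is essential.
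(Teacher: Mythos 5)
Your proof is correct, but it follows a genuinely different route from the paper. The paper does not use the deterministic-time process $(\tau_{[b_N t]}/N)_{t\geq 0}$: it Poissonizes, introducing an independent Poisson clock $N_{\gamma_N}(\cdot)$ of rate $\gamma_N\sim b_N$ so that $(\tau_{N_{\gamma_N}(t)}/N)_{t\geq 0}$ is a true subordinator whose L\'{e}vy exponent is known (from \cite{GG}) to converge to $\lambda\mapsto\lambda^\alpha$; it then applies the same kind of continuous-mapping argument you propose, with the functional $f\mapsto(\sup\{s: f(s)\leq 1\},\mathrm{Im}(f))$ proved continuous on the sets $\mathcal{B}_\eta$ of paths whose range avoids $[1-\eta,1+\eta]$, using $\mathbf{P}(1\in\mathcal{A}_\alpha)=0$. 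The price of Poissonization is that the functional of the Poissonized path is only approximately $|\tau_{(N)}|/b_N$, and roughly half of the paper's proof (the estimate \eqref{5}, via tightness, the compact sets $K_\delta, L_\delta$, Doob's inequality for the martingale $N_{\gamma_N}(s)/\gamma_N-s$, and uniform continuity of $\mathcal{G}$) is devoted to controlling that discrepancy. Your route removes this entirely: with the deterministic time change the identity $(|\tau_{(N)}|/b_N,\tau_{(N)})=(\Lambda,\mathcal{R})(\mathcal{S}^N)$ is exact (your two identities are correct, up to an immaterial additive $1/b_N$ in the counting convention), at the cost of invoking as an external input the classical functional stable invariance principle for partial-sum processes in $D([0,\infty))$ with the $J_1$ topology (Skorohod's theorem), rather than only the Poissonized convergence available in \cite{GG}. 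What both approaches share, and what you correctly single out as the delicate step but only sketch, is the almost-sure continuity of the pair (passage time of level $1$, closed range intersected with $[0,1]$) at the trajectories of $\sigma^{(\alpha)}$; the paper spells this out exactly along the lines you indicate, with $\mathbf{P}(1\in\mathcal{A}_\alpha)=0$ guaranteeing the path jumps strictly across level $1$, so your sketch fills in without difficulty. Your closing remark is also on point: the two marginal convergences (Lemma 1 and the convergence of $\tau_{(N)}$ at criticality from \cite{GG}) cannot simply be combined, since $|\tau_{(N)}|$ is not a Matheron-continuous functional of the set, which is precisely why both you and the paper route the joint law through a single path-valued object.
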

 
  \begin{proof}
   We introduce the process $(N_{\gamma_N}(t))_{t \geq 0}$,
a Poisson process of rate $ \gamma_N $ where we define  $
 \gamma_N := \left( \sum_{n \geq 1} (1-e^{-n/N})K(n) \right)^{-1}$. We take $(N_{\gamma_N}(t))_{t \geq 0}$ independent from $\tau$. It is easy to see that the process $\left(\tau_{N_{\gamma_N}(t)}/N \right)_{t \geq 0}$ is a subordinator. In \cite{GG}, it was shown that the L\'{e}vy exponent of 
$ \left(\tau_{N_{\gamma_N}(t)}/N \right)_{t \geq 0}$ converges to the one of $ \left(\sigma_s^{(\alpha)} \right)_{s \geq 0}$, which implies $ \overline{ \{ \tau_{N_{\gamma_N}(s)}/N ,  s \geq 0 \} } \Rightarrow \mathcal{A}_{\alpha}$  as well as the convergence in law of the entire process $ \left(\tau_{N_{\gamma_N}(t)}/N \right)_{t \geq 0}$ towards $ \left(\sigma_s^{(\alpha)} \right)_{s \geq 0}$ (see \cite{Fi}) in the Polish space $\mathcal{D}$. Here, $\mathcal{D}$ denotes the space
  of c\`{a}dl\`{a}g functions on $[0,1]$ endowed with the standart Skorokhod topology (that is $d(f,g) := \inf_{ \lambda \in \Lambda} \{ \max(||
  \lambda - id||,|| f \circ \lambda - g||) \}$ where $ \Lambda $
  is the set of non-decreasing homeomorphisms from $[0,1]$ to $[0,1]$) and of course $|| f || = \sup_{t \in [0,1]} |f(t)|$ for $f \in \mathcal{D}$.  \\
 \indent We define the function $(F,G) : \mathcal{D} \rightarrow 
    \mathbf{R} \times \mathcal{F} $ by 
     \begin{equation}
   (F,G)(f):     \indent f  \mapsto  ( 
    \sup \left\{s \geq 0 | f(s) \leq 1 \right\} , \text{Im}(f) ),
\end{equation}
   where Im($f$) is the image set of $f$ where 
   we endow the space $\mathbf{R} \times \mathcal{F}$ by the topology   $\sigma \left(\mathcal{B}(\mathbf{R}) \otimes \mathcal{F} \right)$, and using the distance
   $\max(|.|,\rho)$ where $\rho$ is the Hausdorff metric. We will show that, for all continuous bounded $\mathcal{G} : \mathbf{R} \times \mathcal{F} \rightarrow \mathbf{R}$, we have:  
 \begin{equation} \label{5}
 \mathbf{E} \left[\mathcal{G} \left(\frac{|\tau_{(N)}|}{b_N},\tau_{(N)} \right) \right] =
  \mathbf{E} \left[ \mathcal{G} \left( \Big( F,G \Big) \left((\frac{\tau_{N_{\gamma_N}(s)}}{N})_{s \geq 0} \right) \right) \right] + (o_N(1)),
\end{equation} that is that we can consider the joint law in the left hand side of \eqref{AM} as a function of $(\tau_{N_{\gamma_N}(s)}/N)_{s \geq 0}$ up to negligible corrections.\\ \indent Assume for the moment that \eqref{5} is true. Taking into account the convergence $ \left(\tau_{N_{\gamma_N}(t)}/N \right)_{t \geq 0} \Rightarrow  \left(\sigma_s^{(\alpha)} \right)_{s \geq 0}$ and the fact that $\left(F,G \right) \left( \left(\sigma_s^{(\alpha)} \right)_{s \geq 0} \right)$ is exactly the limiting law we want, it is enough to show that $\left(F,G \right)( \cdot )$ is almost surely continuous at $\left(\sigma_s^{(\alpha)} \right)_{s \geq 0}$ to apply the continuous mapping theorem (see \cite{Bi}) to prove our lemma. So let us show this almost sure continuity first, then we will turn to the proof of \eqref{5}. \\ 
 \indent For $\eta  \in (0,1)$, we denote by $\mathcal{B}_{\eta}$ the subset of
  $\mathcal{D}$ defined by 
  \begin{equation}
    \mathcal{B}_{\eta} := \{ f \in \mathcal{D} | [1-\eta;1+\eta] \cap
    \text{Im(f)} = \emptyset  \}.
  \end{equation}
  We already pointed out that $\mathbf{P}( 1 \in \mathcal{A}_{\alpha})
   = 0$, so that with probability one,
   $(\sigma_s^{(\alpha)})_{s \geq 0} \in \cup_{\eta > 0} \mathcal{B}_{\eta}$. 
   We show that $(F,G)$ restricted to
   $\cup_{\eta > 0} \mathcal{B}_{\eta}$ is continuous.\\ \indent
  Let $f \in \mathcal{B}_{\eta}$ with $\eta > 0$, let us show that $F$
  is continuous at $f$. We introduce $s_f :=\sup \{ s \geq 0 | f(s)
  \leq 1 \}$. Let $g \in \mathcal{D}$ such that $d(f,g) <
  \mu/2$ where $ \mu \in (0,\eta)$, and in the same way we define $s_g :=\sup \{ s \geq 0 | g(s)
  \leq 1 \}$. We consider $\lambda \in
  \Lambda$ such that $||f - g \circ \lambda|| < \mu$ and $|| \lambda
  - id|| < \mu$. Furthermore, for all $ \xi > 0$, and as $f \in
  \mathcal{B}_{\eta}$, we have $f(s_f-\xi) < 1-\eta$ and $f(s_f) > 1 +
  \eta$. Then we get
  \begin{equation}
     g(\lambda(s_f-\xi)) \leq f(s_f-\xi) + \mu \leq
  1-\eta + \mu  < 1,
  \end{equation}
   and similarly 
   \begin{equation}
    g(\lambda(s_f)) \geq f(s_f) - \mu > 1 + \eta - \mu > 1, 
   \end{equation}
  which gives
  \begin{equation}
    \lambda(s_f-\xi) \leq s_g \leq \lambda(s_f),
  \end{equation}
 and this entails the desired continuity since $\xi$ is arbitrarily
 small, $\lambda$ is continuous and  $|| \lambda - id|| < \mu$.\\
  Note that, by the virtue of the continuous mapping's theorem, this implies the convergence 
 \begin{equation} \label{cvLT}
 \sup \left\{s \geq 0,\frac{\tau_{N_{\gamma_N}(s)}}{N}\leq 1 \right\}  \Rightarrow L_1^{(\alpha)}.
\end{equation} \indent
 For the second component, it is very easy also; for $g$ as above, we have:
 \begin{equation}
   \rho \left( \text{Im(f)}, \text{Im(g)} \right) = \max \left( \sup_{s \in \mathbf{R}} \inf_{t \in
     \mathbf{R}} |f(t)-g(s)|, \sup_{s \in \mathbf{R}} \inf_{t \in
     \mathbf{R}} |f(s)-g(t)| \right).
 \end{equation}
   And for $\lambda$ as above and $s \in \mathbf{R}$, we have $\inf_{t
     \in \mathbf{R}} |f(s)-g(t)| \leq |f(s)-g(\lambda(s))| \leq
   \mu$, which achieves the proof of the continuity of the
   couple $(F,G)$ on $\cup_{\eta > 0} \mathcal{B}_{\eta}$. \\ \indent We now go to the proof of \eqref{5}.
 Let $\delta >0$, there exists a compact set $ K_{\delta} \subset \mathbf{R}^+$ such that for all  $ N \in \mathbf{N}$, we have:
 \begin{equation} \label{6}
 \max \left( \mathbf{P} \left[ \frac{|\tau_{(N)}|}{b_N} \in K_{\delta}^c \right] , \mathbf{P} \left[ \sup \left\{s \geq 0 \Big| \frac{\tau_{N_{\gamma_N}(s)}}{N}\leq 1 \right\} \in K_{\delta}^c \right] \right) < \delta.  
\end{equation}  
 This is due to the fact that the sequences $ \left(\sup \left\{s \geq 0 \Big| \tau_{N_{\gamma_N}(s)}/N\leq 1 \right\} \right)_{N \geq 0}$ and $ \left( \sup \left\{ \frac{n}{b_N} \Big| \frac{|  
   \tau_n|}{N} \leq 1 \right\} \right)_{N \geq 0} $ are tight because they converge in law (see \eqref{cvLT} and Lemma 1). Similarly, the convergence of the sequence $(\tau_{(N)})_{N \geq 0}$ implies the existence of a compact set $L_{\delta} \subset \mathcal{F}$ such that for every $ N \in \mathbf{N}$,
   \begin{equation} \label{7}
   \mathbf{P} \left[ \tau_{(N)}  \in L_{\delta}^c \right] < \delta.
\end{equation}
 For $N \in \mathbf{N}$, we introduce the event \begin{equation}
 \mathcal{H}_{N,\delta} := \left\{ \frac{|\tau_{(N)}|}{b_N} \in K_{\delta}, \sup \left\{s \geq 0 \Big| \frac{\tau_{N_{\gamma_N}(s)}}{N}\leq 1 \right\} \in K_{\delta}, \tau_{(N)} \in L_{\delta} \right\}.
\end{equation}
 Eqs. \eqref{6} and \eqref{7} say that $\mathbf{P} \left[ \mathcal{H}_{N,\delta}^c \right] < \delta.$\\ \indent
 Let $M > 0$ be an upper bound for $\big| \mathcal{G} \big|$.
  For every $ N \in \mathbf{N}$ and taking into account the fact that $\gamma_N \sim b_N$ (see \cite{GG}, (A.48)), we have: 
 \begin{align} \begin{split} \label{8}
  & \quad \Bigg| \mathbf{E} \left[ \mathcal{G} \left(\sup \left\{ s\geq 0 \Bigg| \frac{\tau_{N_{\gamma_N}(s)}}{N} \leq 1 \right\}, \tau_{(N)} \right) -  \mathcal{G} \left( \frac{|\tau_{(N)}|}{\gamma_N}, \tau_{(N)}  \right)  \right] \Bigg| \\
  & \leq  \mathbf{E} \Bigg[ \Bigg|\mathcal{G} \left(\sup \left\{ s\geq 0 \Bigg| \frac{\tau_{N_{\gamma_N}(s)}}{N} \leq 1 \right\}, \tau_{(N)} \right) -  \mathcal{G} \left( \frac{|\tau_{(N)}|}{\gamma_N}, \tau_{(N)} \right) \Bigg| \1_{\mathcal{H}_{N,\delta}} \Bigg] + 2M \delta  \\  
  & \leq  \mathbf{E} \Bigg[ \Bigg|\mathcal{G} \left(\sup \left\{ s\geq 0 \Bigg| \frac{\tau_{N_{\gamma_N}(s)}}{N} \leq 1 \right\}, \tau_{(N)} \right)  \\  
 & -  \mathcal{G} \left( \sup \left\{ \frac{N_{\gamma_N}(s)}{\gamma_N} -s +s \Bigg| \frac{\tau_{N_{\gamma_N}(s)}}{N} \leq 1 \right\}, \tau_{(N)} \right) \Bigg| \1_{\mathcal{H}_{N,\delta}} \Bigg] + 2M \delta. 
  \end{split}
\end{align}
    \indent For a given $\kappa > 0$, we introduce the event $\mathcal{J}_{\kappa,\delta}$ defined by:
    \begin{equation}
  \mathcal{J}_{\kappa,\delta} := \left\{ \sup_{ s \in K_{\delta} }\Bigg|\frac{N_{\gamma_N}(s)}{\gamma_N} - s \Bigg| \geq
    \kappa \right\}.
\end{equation}  \indent
  Note that the process $\left(\frac{N_{\gamma_N}(s)}{\gamma_N} - s\right)_{s \geq 0}$ is a martingale, so that, applying Doob's inequality, we get: 
 \begin{multline}
  \mathbf{P} \left[ \mathcal{J}_{\kappa,\delta} \right] = \mathbf{P} \left[ \sup_{ s \in K_{\delta} }\Bigg|\frac{N_{\gamma_N}(s)}{\gamma_N} - s \Bigg|^2 \geq
    \kappa^2 \right]  \\
  \leq \frac{1}{\kappa^2} \sup_{ s \in K_{\delta} } \mathbf{E} \left[ \Bigg|\frac{N_{\gamma_N}(s)}{\gamma_N} - s \Bigg|^2  \right] = \frac{ \max K_{\delta} }{\gamma_N \kappa^2}. \end{multline}

  Cutting once again the expectation appearing in Eq.\eqref{8} with respect to the fact that $\mathcal{J}_{\kappa,\delta}$ occurs or not, we get: 
 \begin{align} \begin{split}
  & \quad \bigg| \mathbf{E} \left[ \mathcal{G} \left(\sup \left\{ s\geq 0 \Bigg| \frac{\tau_{N_{\gamma_N}(s)}}{N} \leq 1 \right\}, \tau_{(N)} \right) -  \mathcal{G} \left( \frac{|\tau_{(N)}|}{\gamma_N}, \tau_{(N)} \right)  \right] \bigg| \\
  & \leq \mathbf{E} \Bigg[ \sup_{ |u| \leq \kappa} \Bigg\{ \Bigg| \mathcal{G} \left(\sup \left\{ s\geq 0 \Bigg| \frac{\tau_{N_{\gamma_N}(s)}}{N} \leq 1 \right\}, \tau_{(N)} \right)\\  
 & -  \mathcal{G} \left( u +\sup \left\{ s \geq 0 \Bigg| \frac{\tau_{N_{\gamma_N}(s)}}{N} \leq 1 \Bigg\}, \tau_{(N)} \right) \Bigg| \right\} \1_{\mathcal{H}_{N,\delta}} \1_{\mathcal{J}_{\kappa,\delta}^c} \Bigg] \\
 & + 2M \left(\delta + \frac{ \max K_{\delta} }{\gamma_N \kappa^2} \right). \\
 \end{split}
\end{align}

 In the last inequality, we consider first $\kappa$ very small, which deals with the expectation term due to the uniform continuity of $\mathcal{G}$ on the compact set $K_{\delta} \times L_{\delta}$. Then, we consider $\delta$ small enough, and this achieves the proof of \eqref{5}, since $\gamma_N \stackrel{N \to \infty}{\to} \infty$. Thus the lemma is proved. \end{proof} 
\indent  We are now ready to show the main part of our
   theorem.
  
  \begin{proof}[Proof of Theorem 2]

   We have just shown the following convergence: under the
   law $K(.)$, assuming $K(\infty)=0$, for all $  F $ continuous bounded function on $\mathcal{F}$ and for all $
   \varepsilon \in \mathbf{R}$, we have: 
  
 \begin{equation}
     \mathbf{E} \left[ F(\tau_{(N)})e^{ \varepsilon |\tau_{(N)}|/b_N} \right] \underset{N
       \rightarrow  \infty}\rightarrow
     \mathbf{E} \left[ F(\mathcal{A}_{\alpha})e^{\varepsilon L_1^{(\alpha)}} \right].
   \end{equation} 
 Note that the case where $\varepsilon > 0$ actually uses the fact (see Lemma 2) that the sequence $(e^{\varepsilon |\tau_{(N)}|/b_N})_{N \geq 0}$ is uniformly integrable. \\ \indent

 We then have
 \begin{equation}
   \mathbf{E} \left[ F(\tau_{(N)}) \right] =
   \frac{Z_{N,\beta_c}}{Z_{N,\beta_c+\varepsilon/b_N}}
    \times \frac{1}{Z_{N,\beta_c}} \mathbf{E} \left[ F(\tau_{(N)}) e^{\varepsilon |\tau_{(N)}|/b_N} \right] .
 \end{equation}

  As in the case $K(\infty)=0$, $\mathbf{P}_{N,\beta_c}$ is actually
  $\mathbf{P}$ (because $ \beta_c = -\log(\Sigma_K) = 0$) and thanks to the easy remark that
  $Z_{N,\beta_c+ \varepsilon/b_N} = \mathbf{E} \left[ e^{\varepsilon |\tau_{(N)}|/b_N} \right] \underset{N
       \rightarrow  \infty}\rightarrow \mathbf{E} \left[
     e^{\varepsilon L_1^{(\alpha)}} \right]$, we have actually shown the first part of the
       claim. We also 
       used the fact that $Z_{N,\beta_c} = 1$ for all $N \in
       \mathbf{N}$  see (\cite{GG}, (2.17)).

    We now deal finally with the case $K(\infty) > 0$; we can write: 
     \begin{equation} \label{Eq2}
       \mathbf{E}_{N,\beta_c+ \varepsilon/\tilde{b}_N} \left[F(\tau_{(N)}) \right] =
       \mathbf{\tilde{E}} \left[ F(\tau_{(N)})
       e^{\varepsilon |\tau_{(N)}|/\tilde{b}_N}  \frac{
         \overline{K}(N(1- \max(\tau_{(N)}))) +
         K(\infty)}{Z_{N,\beta_c+ \varepsilon/\tilde{b}_N} \sum_{j>N(1-\max(\tau_{(N)}))}\tilde{K}(j)} \right].
     \end{equation}
  The justification for \eqref{Eq2} can be found in \cite{GG}, equality (2.48). 
  Then, on the event $\max \tau_{(N)} < 1 - \eta$ for a given $\eta \in (0,1)$, it is known that ((2.50) of \cite{GG})) 
  \begin{equation}  
         \frac{\overline{K}(N(1- \max(\tau_{(N)}))) +
         K(\infty)}{Z_{N,\beta_c}
         \sum_{j>N(1-\max(\tau_{(N)}))}\tilde{K}(j)}
       \underset{N \rightarrow \infty}\sim   \frac{\alpha
         \pi}{\sin(\alpha \pi)}(1-\max(\tau_{(N)}))^{\alpha},
  \end{equation} uniformly in the trajectories of $\tau$. 
  Using once again the convergence of the joint law $(\tau_{(N)},|\tau_{(N)}|/\tilde{b}_N)$, where this time $\tau$ is distributed according to $\mathbf{\tilde{P}}$, we get :    
 \begin{multline}  \label{Eq3}
       \mathbf{E}_{N,\beta_c+\varepsilon \tilde{b}_N} \left[F(\tau_{(N)}) \1_{ \max \tau_{(N)} < 1 - \eta } \right] \underset{N \rightarrow  \infty}{\longrightarrow}
 \\ 
        \mathbf{E} \left[ F(\mathcal{A}_{\alpha}) e^{ \varepsilon L_1^{(\alpha)}} \1_{ \max \mathcal{A}_{\alpha} < 1 - \eta } \frac{\alpha \pi}{\sin(\alpha \pi)}(1-\max(\mathcal{A}_{\alpha}))^{\alpha} \right].
   \end{multline}
   The fact that $\max(\mathcal{A}_{\alpha}) < 1$ almost surely allows us to take the above limit for $\eta \searrow 0$. 
 Taking $F \equiv 1$ in \eqref{Eq3}, we get 
  \begin{equation}
    \frac{Z_{N,\beta_c+ \varepsilon/\tilde{b}_N}}{Z_{N,\beta_c}}
     \underset{N \rightarrow \infty} \rightarrow \mathbf{E} \left[
     e^{\varepsilon L_1^{(\alpha)}}\frac{\alpha
         \pi}{\sin(\alpha \pi)}(1-\max(\mathcal{A}_{\alpha}))^{\alpha} \right].
  \end{equation}

 This achieves the proof because we can write

 \begin{multline}
   \mathbf{E}_{N,\beta_c+ \varepsilon/\tilde{b}_N} \left[ F(\tau_{(N)}) \right] \\
   = \frac{Z_{N,\beta_c}}{Z_{N,\beta_c+\varepsilon/\tilde{b}_N}}
     \mathbf{\tilde{E}}\left[ F(\tau_{(N)})
       e^{\varepsilon |\tau_{(N)}|/\tilde{b}_N}  \frac{
         \overline{K}(N(1- \max(\tau_{(N)}))) +
         K(\infty)}{Z_{N,\beta_c} \sum_{j>N(1-\max(\tau_{(N)}))}\tilde{K}(j)} \right],
 \end{multline}
  and the term in the right hand side converges towards the desired quantity.

 \end{proof}
 
 \textbf{Acknowledgement} I am very grateful to my Ph.D. supervisor Giambattista Giacomin for his help and suggestions. I would like to thank Francesco Caravenna also, in particular for the proof of Lemma 2. \\
 \indent I am also grateful to an anonymous referee for his remarks and comments.

\bibliographystyle{alea2}
\bibliography{bibliotpsloc}

\end{document}